\newtheorem{thm}{Theorem}[section]
\newtheorem{lem}[thm]{Lemma}
\theoremstyle{definition}
\let\Lm=\Lambda
\let\ep=\epsilon
\let\al=\alpha
\let\vph=\varphi
\let\imp=\Rightarrow
\let\abs=\envert
\let\Abs=\wenvert
\let\sq=\sqrt
\let\wt=\widetilde
\newcommand{\floor}[1]{\left\lfloor#1\right\rfloor}
\newcommand{\Li}{\mathop{\mathrm{Li}}\nolimits}
\theoremstyle{remark}
\begin{document}
\title{Explicit formulae for primes in arithmetic progressions, II\footnote{2010 Mathematics 
Subject Classification:
11N13.}
\footnote{Key words and phrases: Primes in arithmetic progressions.}}
\author{Tomohiro Yamada}
\date{}
\maketitle

\begin{abstract}
We shall give an explicit version of Bombieri-Vinogradov theorem
for moduli not divisible by an exceptional modulus.
\end{abstract}

\section{Introduction}\label{intro}

The well-known Bombieri-Vinogradov theorem, first proved by Bombieri\cite{Bom},
states that for any constant $A>0$, there exists some constant $B$ such that
\begin{equation}
\sum_{q<\frac{x^\frac{1}{2}}{\log^B x}}\max_{(a, q)=1}\abs{\pi(y; q, a)-\frac{\Li(y)}{\vph(q)}}=O\left(\frac{x}{\log^A x}\right),
\end{equation}
where $\pi(y; q, a)$ denotes the number of primes $\leq y$ congruent to $a\pmod{q}$.

Vaughan's result\cite{Vau} enables us to take $B=A+5$ (see Chapter 28 of Davenport's book \cite{DM}).
Timofeev showed that $B$ can be taken to be $A+\frac{3}{8}$.  Actually,
they have given upper bounds for the sum over large $q$'s and used
Siegel-Walfisz theorem in order to majorize the sum over small $q$'s.  As mentioned
by Harman\cite{Har}, their arguments are effective except the reference
to a possible exceptional zero.  Granville and Soundararajan, in their lecture note\cite{GS},
gave another effective method which enables us to take $B=A+3$ with the right
replaced by $O(x(\log\log x)^2\log^{-A} x)$.

Our purpose of this paper is to give an explicit formula of Bombieri-Vinogradov Theorem
using the method of Granville and Soundararajan.

Before stating our main result, we have to introduce an corollary of results of Kadiri\cite{Kad}
in order to refer a possible exceptional zero.

Define $\Pi(s, q)=\prod_{\chi\pmod{q}}L(s, \chi)$ and let $R_0=6.397$ and $R_1=2.0452\cdots$.
Theorem 1.1 of Kadiri\cite{Kad} states that the function $\Pi(s, q)$ has at most one zero
$\rho=\beta+it$ in the region $0\leq\beta<1-1/R_0\log \max\{q, q\abs{t}\}$,
which must be real and simple and induced by some nonprincipal real primitive character $\wt\chi\pmod{\wt{q}}$ with $987\leq \wt{q}\leq x$.
Moreover, Theorem 1.3 of \cite{Kad} implies that, for any given $Q_1$, such zero satisfies $\beta<1-1/2R_1\log Q_1$
except possibly one modulus below $Q_1$.

\begin{thm}\label{thm1}
Let $A$ be an integer with $2\leq A\leq 7$.
$C_0, x_0$ be the constants with $(C_0, \log\log x_0)=(C, Y_0)$ given in the column with $(\al_1, \al)=(A+3, A+1)$ of Table 4 in \cite[p.p.16--19]{Ymd}.
Moreover, let $q_0$ be the only (possible) modulus with $\log^\frac{3}{2}x<q_0\leq\log^{A+3} x$ such that
there exists a real zero $\beta\geq 1-1/2(A+3)R_1\log\log x$ for $\Pi(s, q_0)$.
If $x\geq x_0$, then
\begin{equation}
\sum_{q\leq\frac{x^\frac{1}{2}}{\log^{A+3} x}, q_0\nmid q}\max_{(a, q)=1}\abs{\psi(y; q, a)-\frac{y}{\vph(q)}}<\left(C_1+(A+3)(C_0+e^{-100})C_2^2\right)\frac{(\log\log x)^2}{2\log^A x}
\end{equation}
where
\begin{equation}
C_2=\prod_p\left(1+\frac{1}{p(p-1)}\right).
\end{equation}
and $C_1$ is the constant given in Table \ref{tbl2}.
\end{thm}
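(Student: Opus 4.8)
The plan is to make the Granville--Soundararajan proof of the Bombieri--Vinogradov theorem effective by splitting the moduli at $z:=\log^{A+3}x$ into a range handled by the explicit Siegel--Walfisz estimate of \cite{Ymd} and a range handled by Vaughan's identity together with the large sieve, and then propagating all constants so that the two pieces assemble into the $C_1$--term and the $(A+3)(C_0+e^{-100})C_2^{2}$--term of the asserted bound (against $x(\log\log x)^{2}/2\log^{A}x$; throughout $y\le x$). As a first reduction, with $\psi(y,\chi)=\sum_{n\le y}\Lambda(n)\chi(n)$, orthogonality gives for $(a,q)=1$
\begin{equation*}
\psi(y;q,a)-\frac{y}{\vph(q)}=\frac{1}{\vph(q)}\sum_{\chi\neq\chi_0}\bar\chi(a)\psi(y,\chi)+\frac{\psi(y)-y}{\vph(q)}.
\end{equation*}
Passing from $\chi$ to the primitive character $\chi^{*}\bmod d$ ($d\mid q$) inducing it costs $O(\log q\log y)$ per modulus, hence $O(x^{1/2}/\log^{A+1}x)$ after summation over $q\le Q:=x^{1/2}/\log^{A+3}x$, which is negligible; the term $(\psi(y)-y)/\vph(q)$ is controlled by an explicit prime number theorem and, summed against $\sum_{q\le Q}1/\vph(q)=O(\log x)$, is likewise absorbed. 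Taking absolute values, grouping characters by conductor and inverting the order of summation bounds the left side of the theorem by
\begin{equation*}
\sum_{1<d\le Q}\Big(\sum_{q\le Q,\,d\mid q,\,q_0\nmid q}\frac{1}{\vph(q)}\Big)\sum^{*}_{\chi\bmod d}\abs{\psi(y,\chi^{*})}+(\text{negligible}),
\end{equation*}
with inner weight at most $\vph(d)^{-1}\bigl(C_2\log(eQ/d)+O(1)\bigr)$ by the explicit form of $\sum_{m\le M}1/\vph(m)$, $C_2$ as in the statement.

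\emph{Small conductors $1<d\le z$.} The decisive input here is \cite[Table~4, column $(\al_1,\al)=(A+3,A+1)$]{Ymd}, which for conductors $d\le\log^{A+3}x$ supplies an explicit Siegel--Walfisz bound for $\psi(y,\chi^{*})$ of the form $(C_0+e^{-100})$ times the relevant main-order quantity once $x\ge x_0$---the $e^{-100}$ absorbing the zero-free-region tail and the secondary main term for $x\ge x_0$---\emph{provided} $L(s,\chi^{*})$ has no exceptional zero. That none is present is precisely what $q_0\nmid q$ secures: by Kadiri's Theorem~1.1 such a zero with $d\le\log^{A+3}x$ would be induced by a real primitive character of conductor $\wt q\ge987$; by the remark after Kadiri's Theorem~1.3 it then satisfies $\beta<1-1/2(A+3)R_1\log\log x$ with at most one exceptional modulus $q_0$; and necessarily $q_0\in(\log^{3/2}x,\log^{A+3}x]$, since for $\wt q\le\log^{3/2}x$ the unconditional bound $\beta<1-1/R_0\log\wt q$ already excludes it, using $\tfrac32R_0<2(A+3)R_1$ for $2\le A\le7$. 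Hence every conductor $d\mid q$ occurring in our sum is $\neq q_0$ and carries no exceptional zero. Summing the per-character bound against the conductor range $d\le z$ and against the weight $\vph(d)^{-1}(C_2\log(eQ/d)+O(1))$---each contributing a factor $\le C_2(A+3)\log\log x+O(1)$---produces the $(A+3)(C_0+e^{-100})C_2^{2}\,x(\log\log x)^{2}/(2\log^{A}x)$ term.

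\emph{Large conductors $z<d\le Q$.} For these I would expand $\psi(y,\chi^{*})$ by Vaughan's identity with parameters $\asymp\log^{A+3}x$. The resulting Type~I sums over intervals are $\ll\sqrt{d}\log d$ (P\'olya--Vinogradov) times a short divisor sum, and summed over $d$ and $\chi^{*}$ with the weight above they contribute $O(x^{7/8})$, which is negligible. The Type~II bilinear forms are estimated by the large sieve, $\sum_{d\le D}\tfrac{d}{\vph(d)}\sum^{*}_{\chi\bmod d}\bigl|\sum_{n\le N}a_n\chi(n)\bigr|^{2}\le(D^{2}+N)\sum_n\abs{a_n}^{2}$, followed by Cauchy--Schwarz, a dyadic splitting in $d$, and an explicit bound for $\sum_{n\le N}\tau(n)^{2}$; with $Q=x^{1/2}/\log^{A+3}x$ the main contribution is $x^{1/2}Q=x/\log^{A+3}x$, and carrying every constant---the $(\log\log x)^{2}$ reflecting Granville and Soundararajan's refinement of the usual logarithmic loss---yields $\le C_1\,x(\log\log x)^{2}/(2\log^{A}x)$, with $C_1$ recorded in Table~\ref{tbl2}. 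Adding the two ranges gives the theorem.

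The main obstacle is the explicit Siegel--Walfisz step. Because Kadiri's zero-free region degrades like $\log q$, for $q$ near $\log^{A+3}x$ it yields only $\beta<1-1/R_0(A+3)\log\log x$, so one must verify that the truncated explicit formula still beats the main-order quantity with \emph{exactly} the constant $C_0$ of \cite{Ymd} for all $x\ge x_0$, and that the exceptional modulus is genuinely unique on $[1,\log^{A+3}x]$, so that the single exclusion $q_0\nmid q$ is enough. The remainder is bookkeeping: propagating explicit constants through Vaughan's identity, the large sieve, P\'olya--Vinogradov, and the elementary sums $\sum1/\vph(m)$ and $\sum\tau(n)^{2}$ so that the accumulated constants collapse to $C_1$ and $(A+3)(C_0+e^{-100})C_2^{2}$.
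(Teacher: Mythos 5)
Your overall architecture (split at conductor $\log^{A+3}x$, explicit Siegel--Walfisz from \cite{Ymd} below, bilinear/large-sieve work above) matches the paper's skeleton, but there are two genuine gaps. First, your claim that the exceptional modulus necessarily lies in $(\log^{3/2}x,\log^{A+3}x]$ because ``for $\wt q\le\log^{3/2}x$ the unconditional bound $\beta<1-1/R_0\log\wt q$ already excludes it'' is wrong: Kadiri's Theorem 1.1 explicitly permits one real zero \emph{inside} that region for a real primitive character with $\wt q\ge 987$, so for $987\le\wt q\le\log^{3/2}x$ nothing in the zero-free region prevents a Siegel zero arbitrarily close to $1$. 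The paper closes exactly this case by a different input, Theorem 3 of \cite{LW}, which gives an explicit lower bound on $1-\beta$ for real zeros of small conductor (producing the $x^{-c/(\log^{1/4}x(\log\log x)^2)}$-type term that is then checked to be $<e^{-2000}x\log\log x/\log^{A+1}x$ for $x\ge x_0$). Without such an input your exclusion ``$q_0\nmid q$'' does not cover characters of conductor below $\log^{3/2}x$, and the $(A+3)(C_0+e^{-100})C_2^2$ term is not justified.

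Second, for the large conductors you substitute Vaughan's identity plus P\'olya--Vinogradov and the unrestricted large sieve for the paper's actual mechanism, and then assert that ``carrying every constant'' yields precisely the tabulated $C_1$. The paper does something structurally different: it approximates $\Lm(n)$ by $\Lm_{R^2}(n)$ and then by $g(n)\log n$ (the indicator of $R^2$-rough numbers weighted by $\log$), uses the conductor-restricted large sieve of Lemmas \ref{lm1}--\ref{lm2} (the gain $\sq{MN}/R$ comes from the restriction to conductors $\geq R$) on the bilinear coefficients $c_n=\Lm_{R^2}(n)-g(n)\log n$, whose $\ell^2$ norms carry only one power of $\log$, and then controls $G^{(R)}$ not by character-sum cancellation at all but by the fundamental lemma of sieve theory (Greaves, via Lemma \ref{lm3}), which is where the dominant constant $C_9$ and hence most of $C_1$ in Table \ref{tbl2} originates. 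In your route the Type II coefficients inherit divisor-type weights ($\sum_{n\le N}\tau(n)^2\asymp N\log^3N$), and together with the dyadic splittings and the $\max_y$ these cost extra powers of $\log x$ beyond the $\log^3x$ budget that $R=Q^{-1}x^{1/2}=\log^{A+3}x$ can absorb; this is precisely why the classical Vaughan treatment needs $B=A+5$ rather than $A+3$ with only a $(\log\log x)^2$ loss. So at best your method would prove a weaker statement with different (and unproved) constants; the step ``the accumulated constants collapse to $C_1$'' is asserted, not derived, and the key ideas that make the paper's constants come out (the rough-number comparison and the explicit fundamental-lemma estimate) are missing from the proposal.
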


One of important applications of Bombieri-Vinogradov theorem is estimation of error terms in sieve formulae.
Sieve formulae often give error terms of the form
\begin{equation}
\sum_{d\mid P}\abs{\pi(x; q, a)-\frac{\pi(x)}{\vph(q)}}
\end{equation}
where $P$ denotes the product of primes below a given number.  When we plan to apply Bombieri-Vinogradov theorem
to such error terms, it suffices only to consider squarefree moduli.  This allows us to obtain a better upper bound as follows.
\begin{thm}\label{thm2}
Let $A, C_0, x_0$ be as in Theorem \ref{thm1}.
If $x\geq x_0$, then
\begin{equation}
\begin{split}
&\sum_{q\leq\frac{x^\frac{1}{2}}{\log^{A+3} x}, q_0\nmid q}\mu^2(q)\max_{(a, q)=1}\abs{\psi(y; q, a)-\frac{y}{\vph(q)}}\\
&\quad<\left(C_1^\prime+(A+3)(C_0+e^{-100})C_{13}\right)\frac{(\log\log x)^2}{2\log^A x},
\end{split}
\end{equation}
where
\begin{equation}
C_{13}=1+\frac{1.334}{(A+3)\log\log x_0}
\end{equation}
and $C_1^\prime$ is the constant given in Table \ref{tbl3}.
\end{thm}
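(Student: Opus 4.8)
The plan is to repeat the proof of Theorem~\ref{thm1} essentially word for word, inserting the restriction $\mu^2(q)=1$ wherever a modulus $q$ is summed, and then to re-estimate the arithmetic sums over moduli that determine the constants. No analytic ingredient changes: the zero-free region of Kadiri, the explicit estimate for $\psi(y;q,a)-y/\vph(q)$ over small moduli from Table~4 of \cite{Ymd} (which supplies $C_0$), the large sieve inequality, and the Granville--Soundararajan splitting of $\psi(y;q,a)-y/\vph(q)$ into a primitive-character contribution and a secondary exceptional-zero contribution are all used exactly as before. Only the combinatorial weights attached to the moduli improve, and it is convenient to isolate that improvement first.

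For the sums over all moduli appearing in Theorem~\ref{thm1} one uses the identity $n/\vph(n)=\sum_{d\mid n}\mu^2(d)/\vph(d)$, which gives
\[
\sum_{q\le Q}\frac{1}{\vph(q)}=\sum_{d\le Q}\frac{\mu^2(d)}{d\,\vph(d)}\sum_{m\le Q/d}\frac1m\le C_2(\log Q+O(1)),\qquad \sum_{q\le Q}\frac{q}{\vph(q)}\le C_2\,Q+O(\log Q),
\]
so that in each case the relevant density constant is $C_2=\prod_p(1+1/(p(p-1)))$. When the summation is restricted to squarefree $q$ these constants drop: one has the classical estimate $\sum_{m\le M}\mu^2(m)/\vph(m)\le\log M+c_0$, where $c_0$ is an explicit constant (one may take $c_0=\gamma+\sum_p\frac{\log p}{p(p-1)}<1.334$), together with $\sum_{q\le Q}\mu^2(q)q/\vph(q)\le Q+O(\log Q)$ and $\sum_{q\le Q}\mu^2(q)=(6/\pi^2)Q+O(\sqrt Q)$, so the density constant becomes $1$ (and the plain count acquires the factor $6/\pi^2$), with an explicit lower-order term. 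Moreover, for squarefree $q=d\,e$ with $(d,e)=1$ one has $1/\vph(q)=1/(\vph(d)\vph(e))$ exactly, so that nested divisor sums factor cleanly; this exact factorisation is the only place the hypothesis $\mu^2(q)=1$ is genuinely used.

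In the large-sieve (main) part of the Granville--Soundararajan argument, every sum of the shape $\sum_{q\le Q,\,d\mid q}1/\vph(q)$, every weight $\sum_{q\le Q}q/\vph(q)$ coming from the large sieve inequality, and every plain count $\sum_{q\le Q}1$ is replaced by its squarefree analogue; the large sieve inequality itself, applied to the sub-family of squarefree moduli, loses nothing. Feeding the improved constants ($1$ in place of $C_2$, and $6/\pi^2$ in place of $1$) through the same computation that produced $C_1$ gives the constant $C_1^\prime$ of Table~\ref{tbl3}. In the exceptional-zero part, the possible secondary real primitive character $\wt\chi\pmod{\wt q}$ with $\wt q\le\log^{3/2}x$ influences only those $q$ in the range $q\le\log^{A+3}x$ that are multiples of $\wt q$; when such a $q$ is additionally squarefree one writes $q=\wt q\,m$ with $m$ squarefree and $(m,\wt q)=1$ (and the entire contribution vanishes unless $\wt q$ is itself squarefree), so the two factors of $C_2$ in Theorem~\ref{thm1} (each of which comes from a sum of $1/\vph$ or $q/\vph$ over a range of moduli) now have leading constant $1$ each. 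Collecting the explicit lower-order terms against the length of the relevant modulus range, which is at least $\log(\log^{A+3}x_0)=(A+3)\log\log x_0$, replaces $C_2^2$ by $1+1.334/((A+3)\log\log x_0)=C_{13}$, and hence $(A+3)(C_0+e^{-100})C_2^2$ by $(A+3)(C_0+e^{-100})C_{13}$, which is the bound claimed.

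The work here is bookkeeping rather than a new idea, and the point needing the most care is precisely this last step: one must verify that in Theorem~\ref{thm1} the factor $C_2^2$ genuinely arises from two such sums over ranges of moduli (so that both copies are really improved by the squarefree restriction), that the lower-order constants there are explicit and of the form used above, and that imposing $\mu^2(q)=1$ does not disturb the dyadic decomposition of the range $q\le x^{1/2}/\log^{A+3}x$ or the passage from $\psi(y,\chi)$ to $\psi(y,\chi^{\ast})$ (whose error $O(\log q\log y)$ must still be summed, now over squarefree $q$, which only helps). With these explicit versions in hand — they are exactly the ones used for Theorem~\ref{thm1} — the numerical value of $C_1^\prime$ in Table~\ref{tbl3} is obtained by rerunning the computation behind $C_1$ in Table~\ref{tbl2} with the substitutions $C_2\mapsto1$ and $1\mapsto6/\pi^2$ made in the appropriate places, while $C_{13}$ is the quantity recorded above.
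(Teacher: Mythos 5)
Your proposal follows essentially the same route as the paper: it reruns the proof of Theorem~\ref{thm1} with the squarefree restriction, replaces each occurrence of $\sum_{q}1/\vph(q)\le C_2(1+\log Q)$ by the squarefree bound $\sum_q\mu^2(q)/\vph(q)\le\log(\cdot)+1.334$ (the paper's lemma with $C_{11}=1.334$, proved there by explicit computation for arguments $\ge 7920$, a threshold all relevant ranges exceed), and absorbs the additive constant over a range of logarithmic length at least $(A+3)\log\log x_0$ to produce exactly $C_{13}$, with the remaining bookkeeping yielding $C_1^\prime$. The only cosmetic difference is that the paper leaves the large-sieve constants $C_6, C_7, C_8$ untouched (they enter only through a $1/\log\log x$ factor) and improves just the $G^{(1)}$ and exceptional-character sums, whereas you also propose sharpening the large-sieve part (and invoke a $6/\pi^2$ count), which is unnecessary but harmless since it can only strengthen the bound.
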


We note that Granville and Stark \cite{GSt} showed that the abc conjecture,
which Mochizuki states that he proved\cite{Mch}, implies the nonexistence of Siegel zero
for characters with negative discriminants.  However, their method does not appear
to work for characters with positive discriminants.

For calculations of constants, we used PARI-GP.  Our script is available by requiring the author and can be used
to calculate constants except $C_1$ for arbitrary values of $A$.

\section{Notations and Preliminary lemmas}\label{lemmas}
Throughout this paper, we denote by $C_1, C_2, \ldots$ denote effectively computable constants
and $\theta$ denotes a quantity with absolute value $\leq 1$ not necessarily same
at each occurence.  Moreover, we denote the squarefree part of an integer $n$ by $n^*$.

For an arithmetic function $f(n)$ and the sum $F(x; q, a)=\sum_{n\leq x, n\equiv a\pmod{q}}f(n)$, we define
\begin{equation}
\begin{split}
F^{(R)}(x; q, a)= & F(x; q, a)-\frac{1}{\vph(q)}\sum_{\chi\pmod{q}}^{(\leq R)}\bar\chi(a)\sum_{n\leq x}f(n)\chi(n)\\
= & \frac{1}{\vph(q)}\sum_{\chi\pmod{q}}^{(>R)}\bar\chi(a)\sum_{n\leq x}f(n)\chi(n).
\end{split}
\end{equation}

For a sequence $(a_n)$, we put $\Abs{(a)}=\sq{\sum_n \abs{a_n}^2}$.
Moreover, $\sum_{\chi\pmod{(q)}}^{(r)}$ indicates the sum over all characters $\pmod{q}$ 
with primitive characters $\pmod{r}$ and
$\sum_{\chi\pmod{(q)}}^{(P)}$ indicates the sum over all characters $\pmod{q}$ 
each of which has its primitive character $\pmod{r}$ with $r$ satisfying the property $P$.

Now we shall introduce some preliminary lemmas, beginning by upper bounds for several quantities involving arithmetic functions.
\begin{lem}\label{lm0}
Let $Q_0$ be a constant greater than $223092870$.
For any $x, y>1, z>y$ and $q, u>Q_0$,
\begin{gather}
\prod_{y\leq p<z}\frac{p}{p-1}< 2\frac{\log z}{\log y},\\
\frac{q}{\vph(q)}<C_3\log\log q,\\
\sum_{n\leq x}\frac{1}{\vph(n)}<C_2(1+\log x),\\
\sum_{n\leq u}\frac{\mu^2(n)n}{\vph^2(n)}\leq C_4\log u,\\
\sum_{n\leq x}\frac{\mu^2(n)}{\vph^2(n)}<C_5
\end{gather}
and
\begin{equation}
\pi(x)<\frac{2x}{\log x},
\end{equation}
where 
\begin{gather}
C_3=e^\gamma+\frac{5}{2\log\log Q_0},\\
C_4=C_2+\frac{\frac{89}{16}-C_2\log 6}{\log Q_0},\\
\end{gather}
and
\begin{equation}
C_5=\prod_p\left(1+\frac{1}{(p-1)^2}\right).
\end{equation}

\end{lem}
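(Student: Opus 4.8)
The plan is to establish each of the seven inequalities essentially independently, in each case by combining a known effective asymptotic (typically due to Rosser--Schoenfeld or standard Mertens-type estimates) with a careful bookkeeping of the error term valid for arguments exceeding $Q_0$. First I would handle $\prod_{y\leq p<z}\frac{p}{p-1}$: writing this as a ratio of $\prod_{p<z}(1-1/p)^{-1}$ to $\prod_{p<y}(1-1/p)^{-1}$ and invoking the effective form of Mertens' third theorem, $\prod_{p<t}(1-1/p)^{-1}=e^\gamma\log t\,(1+\theta\,r(t))$ with an explicit $r(t)\to0$, the product is $\frac{\log z}{\log y}\cdot\frac{1+\theta r(z)}{1+\theta r(y)}$, and since $y,z>Q_0$ is huge the fluctuation factor is comfortably below $2$; actually one expects a bound much closer to $1$, and the slack to $2$ is deliberately generous. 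For $q/\vph(q)<C_3\log\log q$ with $C_3=e^\gamma+5/(2\log\log Q_0)$, I would quote the Rosser--Schoenfeld inequality $\frac{n}{\vph(n)}<e^\gamma\log\log n+\frac{5}{2\log\log n}$ valid for $n\geq3$, and then note that for $q>Q_0$ the second term is at most $\frac{5}{2\log\log Q_0}\log\log q$ since $\log\log q\geq1$; this immediately gives the claimed form.

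Next, for the three sums over $\vph$: $\sum_{n\leq x}1/\vph(n)<C_2(1+\log x)$ with $C_2=\prod_p(1+\frac{1}{p(p-1)})$ follows from the standard fact that $\sum_{n\leq x}1/\vph(n)=\frac{\zeta(2)\zeta(3)}{\zeta(6)}(\log x+\text{const})+O(\cdot)$ together with the Euler-product identity $\frac{\zeta(2)\zeta(3)}{\zeta(6)}=\prod_p(1+\frac{1}{p(p-1)})=C_2$; I would use an effective version (e.g.\ via $\frac{1}{\vph(n)}=\frac{1}{n}\sum_{d\mid n}\frac{\mu^2(d)}{\vph(d)}$ and partial summation) and absorb the lower-order and error contributions into the $+1$, which again is generous for $x$ past $Q_0$-type ranges. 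The bound $\sum_{n\leq u}\frac{\mu^2(n)n}{\vph^2(n)}\leq C_4\log u$ with the explicitly-patched constant $C_4=C_2+\frac{89/16-C_2\log6}{\log Q_0}$ I would prove by first observing the Dirichlet series $\sum\mu^2(n)n/\vph(n)^2 n^{-s}$ has leading behavior giving $\sum_{n\leq u}\mu^2(n)n/\vph^2(n)\sim C_2\log u$, writing the sum as $C_2\log u + E(u)$, bounding $|E(u)|$ by an absolute constant (the $89/16$ encodes a numerical bound at the base point and the multiplicative structure), and then for $u>Q_0$ estimating $E(u)\leq\frac{89/16-C_2\log6}{\log Q_0}\log u$ — here $\log6$ appears because $223092870$-type thresholds relate to $Q_0$ being a primorial; this patching step is the one piece of genuine numerical care. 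The convergent sum $\sum_{n\leq x}\mu^2(n)/\vph^2(n)<C_5=\prod_p(1+\frac{1}{(p-1)^2})$ is immediate by extending to all $n$ and recognizing the resulting Euler product, since the function $\mu^2(n)/\vph^2(n)$ is multiplicative and supported on squarefrees.

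Finally, $\pi(x)<2x/\log x$ is the crudest of all and follows from any of the classical effective Chebyshev bounds (e.g.\ Rosser--Schoenfeld give $\pi(x)<1.26x/\log x$ for $x>1$, or one checks small $x$ by hand); the factor $2$ is far from tight. The main obstacle, such as it is, will be the fourth inequality: getting the error term in $\sum_{n\leq u}\mu^2(n)n/\vph^2(n)$ down to a clean constant that can then be divided by $\log Q_0$ requires either a careful contour/partial-summation argument against the associated zeta-ratio or a direct Rankin-type estimate, and verifying the numerical value $89/16$ is consistent with the chosen $Q_0>223092870$. All the other steps are direct appeals to Rosser--Schoenfeld-type effective estimates plus the observation that ``for arguments beyond a primorial threshold, lower-order terms are negligible,'' so the proof is mostly assembly rather than invention.
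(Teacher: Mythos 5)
Your overall strategy coincides with the paper's: every item except the fourth is dispatched by quoting Rosser--Schoenfeld-type effective estimates (the paper cites their Theorems 7--8 for the Mertens-type product, Theorem 15 (3.41) for $q/\vph(q)$, Theorem 1 (3.2) for $\pi(x)<2x/\log x$, and the argument from Nathanson for $\sum_{n\leq x}1/\vph(n)$, exactly as you propose), the fifth is the trivial extension to the full Euler product, and the fourth is handled by proving $\sum_{n\leq u}\mu^2(n)n/\vph^2(n)\leq C_2\log u+\left(\tfrac{89}{16}-C_2\log 6\right)$ for all $u>1$ and then absorbing the constant into $C_4\log u$ via $\log u>\log Q_0$ --- the same ``main term plus bounded error, divided through by $\log Q_0$'' shape you describe.

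Two caveats. First, your argument for $\prod_{y\leq p<z}\frac{p}{p-1}<2\frac{\log z}{\log y}$ leans on ``$y,z>Q_0$ is huge,'' but the lemma asserts this for all $z>y>1$; only $q$ and $u$ are required to exceed $Q_0$. The bound is in fact nearly sharp at small $y$ (take $y=2$ and $z\to 2^{+}$: the left side equals $2$ while the right side tends to $2$), so ``the fluctuation factor is comfortably below $2$'' does not cover the stated range; you need the ratio estimate valid down to $y>1$, which is precisely what the paper's citation of Rosser--Schoenfeld supplies. Second, the fourth inequality is where the paper does its only real work, and it does so not by Dirichlet-series or contour arguments but by an elementary nested computation: writing $\mu^2(n)n/\vph^2(n)=\frac{\mu^2(n)}{\vph(n)}\sum_{l\mid n}\frac{\mu^2(l)}{\vph(l)}$, estimating $\sum_{x_1<q\leq x,\,(q,l)=1}\mu^2(q)/\vph(q)$ with explicit error constants, and finishing with a direct numerical check. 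The constant $\tfrac{89}{16}$ is simply the value of the partial sum at $u=6$, the extremal point of the difference, so $\log 6$ has nothing to do with $Q_0$ being a primorial. Your proposal correctly flags this step as the main obstacle but leaves it unexecuted; as a plan it is sound and parallel to the paper, but the explicit error bound and the numerical verification are the substantive content that still has to be supplied.
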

\begin{proof}
It follows from \cite[Theorem 7--8, p.70]{RS} that
\begin{equation}
\prod_{y\leq p<z}\frac{p}{p-1}<C_1\frac{\log z}{\log y}.
\end{equation}
By \cite[Theorem 15, (3.41), p.p.71--72]{RS}, we have $\vph(n)>C_3 n/\log\log n$.
We can see that $\sum_{m<x}\frac{1}{\vph(m)}<C_2(1+\log x)$ from the argument
in the proof of Theorem A. 17 in \cite[p. 316]{Nat}.
Moreover, it is easy to see that $\sum_{n\leq x}\frac{\mu^2(n)}{\vph^2(n)}<\sum_{n}\frac{\mu^2(n)}{\vph^2(n)}=C_5$.

In order to show $\sum_{n\leq x}\frac{\mu^2(n)n}{\vph^2(n)}<C_2\log x+C_4$,
we begin by estimating $Q_m(x_1, x)=\sum_{x_1<n\leq x, (n, m)=1}\mu^2(n)/n$,
the sum of reciprocals of squarefree integers $n$ with $x_1<n\leq x$.

Let $Q(x)$ the number of squarefree integers $\leq x$.  Then we have
\begin{equation}
Q(x)=\sum_m\mu(m)\floor{\frac{x}{m^2}}=x\sum_{m\leq\sq{x}}\frac{\mu(m)}{m^2}+\theta Q(\sq{x})
=\frac{x}{\zeta(2)}+2\theta\sq{x}
\end{equation}
and therefore
\begin{equation}\label{eq21}
\begin{split}
Q_m(x_1, x)=&\sum_{l\mid m}\mu(l)\sum_{\substack{k\leq\sq{x},\\ (k, m)=1}}\mu(k)\sum_{\substack{k^2l\mid n,\\ x_1<n\leq x}}\frac{1}{n}
=\sum_{l\mid m}\frac{\mu(l)}{l}\sum_{\substack{k\leq\sq{x},\\ (k, m)=1}}\frac{\mu(k)}{k^2}\sum_{\frac{x_1}{k^2 l}<n\leq\frac{x}{k^2 l}}\frac{1}{n}\\
=&\sum_{l\mid m}\frac{\mu(l)}{l}\sum_{\substack{k\leq\sq{x},\\ (k, m)=1}}\frac{\mu(k)}{k^2}\left(\log\frac{x}{x_1}+\theta k^2l\left(\frac{1}{x_1}+\frac{1}{x}\right)\right)\\
=&\log\frac{x}{x_1}\left(\sum_{l\mid m}\frac{\mu(l)}{l}\right)\left(\sum_{\substack{k\leq\sq{x},\\ (k, m)=1}}\frac{\mu(k)}{k^2}\right)
+\theta d(m^*)\sq{x}\left(\frac{1}{x_1}+\frac{1}{x}\right).
\end{split}
\end{equation}

Since
\begin{equation}
\begin{split}
\sum_{\substack{x_1<q\leq x,\\ (q, l)=1}}\frac{\mu^2(q)}{\vph(q)}=&\sum_{\substack{x_1<q\leq x,\\ (q, l)=1}}\frac{\mu^2(q)}{q}\prod_{p\mid q}\frac{p}{p-1}
=\sum_{\substack{x_1<q\leq x,\\ (q, l)=1}}\frac{\mu^2(q)}{q}\prod_{m^*\mid q}\frac{1}{m}\\
=&\sum_{m=1}^\infty\frac{1}{m}\sum_{\substack{m^*\mid q,\\ x_1<q\leq x,\\ (q, l)=1}}\frac{\mu^2(q)}{q}=\sum_{m=1}^\infty\frac{1}{mm^*}\sum_{\substack{x_1/m^*<r\leq x/m^*,\\ (r, ml)=1}}\frac{\mu^2(r)}{r}\\
<&\sum_{m=1}^\infty\frac{1}{mm^*}Q_{ml}\left(\frac{x_1}{m^*},\frac{x}{m^*}\right),
\end{split}
\end{equation}
(\ref{eq21}) gives
\begin{equation}\label{eq22}
\begin{split}
\sum_{\substack{x_1<q\leq x,\\ (q, l)=1}}\frac{\mu^2(q)}{\vph(q)}=&\sum_{m\leq x}\frac{1}{mm^*}\left[\left(\frac{h_2(ml)}{\zeta(2)}+\theta\frac{m^*}{x}\right)\frac{\vph(ml)}{ml}\log\frac{x}{x_1}+\theta d((ml)^*)\sq{m}\left(\frac{\sq{x}}{x_1}+\frac{1}{\sq{x}}\right)\right]\\
=&\frac{\vph(l)}{l}\log\frac{x}{x_1}+\theta\left[B_1(l)\frac{\log\frac{x}{x_1}}{\sq{x}}+B_2(l)\left(\frac{\sq{x}}{x_1}+\frac{1}{\sq{x}}\right)\right],
\end{split}
\end{equation}
where $B_1(l), B_2(l)$ and $h_2(n)$ are arithmetic functions defined by
\begin{equation}
B_1(l)=\frac{\zeta(3/2)}{\zeta(3)}\prod_{p\mid l}\frac{\sq{p}(p-1)}{p^\frac{3}{2}+1},
\end{equation}
\begin{equation}
B_2(l)=2^{\omega(l)}\prod_p\left(1+\frac{2}{\sq{p}(p-1)}\right)\prod_{p\mid l}2\left(1+\frac{2}{\sq{p}(p-1)}\right)^{-1}
\end{equation}
and
\begin{equation}
h_2(n)=\prod_{p\mid n}\left(1-\frac{1}{p^2}\right)^{-1}.
\end{equation}

\begin{equation}
\begin{split}
\sum_{x_1<n\leq x}\frac{\mu^2(n)n}{\vph^2(n)}=&\sum_{x_1<n\leq x}\frac{\mu^2(n)}{n}\sum_{l\mid n}\frac{\mu^2(l)}{l}=\sum_l\frac{\mu^2(l)}{\vph(l)}\sum_{\substack{x_1<q\leq x,\\ l\mid q}}\frac{\mu^2(q)}{\vph(q)}\\
=&\sum_l\frac{\mu^2(l)}{\vph^2(l)}\left(\frac{\vph(l)}{l}\log\frac{x}{x_1}+\theta\left[B_1(l)\frac{\log\frac{x}{x_1}}{\sq{x}}+B_2(l)\left(\frac{\sq{x}}{x_1}+\frac{1}{\sq{x}}\right)\right]\right)\\
=&C_2\log\frac{x}{x_1}+\theta\left[B_3\frac{\log\frac{x}{x_1}}{\sq{x}}+B_4\left(\frac{\sq{x}}{x_1}+\frac{1}{\sq{x}}\right)\right],
\end{split}
\end{equation}
where
\begin{equation}
B_3=\sum_l\frac{\mu^2(l)}{\vph^2(l)}B_1(l)=\frac{\zeta(3/2)}{\zeta(3)}\prod_{p}\left(1+\frac{p}{(p-1)(p^\frac{3}{2}+1)}\right)
\end{equation}
and
\begin{equation}
B_4=\sum_l\frac{\mu^2(l)}{\vph^2(l)}B_2(l)=\prod_{p}\left(1+\frac{2}{\sq{p}(p-1)}\right)\left(1+\frac{2\sq{p}}{(p-1)^2\left(1+\frac{2}{\sq{p}(p-1)}\right)}\right).
\end{equation}

Now we can confirm $\sum_{n\leq x}\frac{\mu^2(n)n}{\vph(n)}\leq C_2\log x+\frac{89}{16}-C_2\log 6\leq C_4\log x$ for all $x>1$ by calculation.

Finally, using the formula of Rosser and Schoenfeld \cite[p. 69, Theorem 1. (3.2)]{RS},
we have $\pi(x)<\frac{2x}{\log x}$ for any $x>1$.  An elementary proof is implicit in \cite[Chap. 22]{HW}.
\end{proof}

Next, we shall introduce explicit versions of Propositions in Chapter 13 of \cite{GS}.
\begin{lem}\label{lm1}
\begin{equation}
\begin{split}
&\sum_{q\leq Q}\sum_{\chi\pmod{q}}^{(\geq R)} \abs{\sum_{m=X+1}^{X+M}a_m\chi(m)}\abs{\sum_{n=Y+1}^{Y+N}b_n\chi(n)}\\
&\quad <C_3\log\log Q\Abs{(a)}\Abs{(b)}\\
&\qquad \times \left(\frac{\al_1}{\al_1-1}C_4\left(\frac{\sq{MN}}{R}\log Q+\frac{\al_1}{\log\al_1}(\sq{M}+\sq{N})\log^2 Q\right)+\frac{\al_1^3}{\al_1-1}C_5 Q\right).
\end{split}
\end{equation}
\end{lem}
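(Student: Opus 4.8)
The plan is to deduce this estimate from the large-sieve-type inequality for character sums, applied after separating the characters by their conductor $r$. First I would use the standard device of writing $\chi \pmod q$ in terms of its primitive inducing character $\chi^* \pmod r$ with $r \mid q$, $r \geq R$, and passing to the sum over primitive characters; the cost of completing $\chi^*(m)$ back to an interval coprimality condition is absorbed into the $C_3 \log\log Q$ factor via the bound $q/\vph(q) < C_3 \log\log q$ from Lemma~\ref{lm0}. Thus it suffices to bound
\[
\sum_{r \leq Q}\ \sideset{}{^*}\sum_{\chi \pmod r}\Bigl(\sum_{r \leq Q/r'} \cdots\Bigr)\Bigl|\sum_m a_m \chi(m)\Bigr|\Bigl|\sum_n b_n \chi(n)\Bigr|
\]
after reorganizing, and then apply Cauchy–Schwarz in the form $|S_a S_b| \leq \tfrac12(|S_a|^2 \cdot (N/M)^{1/2} + |S_b|^2 \cdot (M/N)^{1/2})$ or, more simply, Cauchy–Schwarz across the $\chi$-sum so that we are left with two sums of the shape $\sum_r \sum_{\chi}^* |\sum_m a_m\chi(m)|^2$.

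Next I would invoke the multiplicative large sieve inequality in its explicit form: $\sum_{r \leq R'}\frac{r}{\vph(r)}\sideset{}{^*}\sum_{\chi\pmod r}\bigl|\sum_{X < m \leq X+M} a_m \chi(m)\bigr|^2 \leq (R'^2 + M)\Abs{(a)}^2$, together with the bound $\sum_{n \leq u}\mu^2(n)n/\vph^2(n) \leq C_4 \log u$ from Lemma~\ref{lm0} to handle the $r/\vph(r)$ weight when I strip it out by partial summation over $r$. Splitting the range of $r$ dyadically (or just at $R \leq r \leq Q$), the diagonal term $M$ in the large sieve contributes the $\sq{M}\log^2 Q$ and $\sq{N}\log^2 Q$ terms once the factor $\al_1/\log\al_1$ is introduced to control the loss in the dyadic decomposition near $r = R$, while the off-diagonal term $r^2 \leq Q^2$ contributes, after summing the $\mu^2(r)r/\vph^2(r)$ weights up to $Q$, the $Q\log Q$-type term — and after Cauchy–Schwarz pairs the $a$-sum with the $b$-sum, the genuinely large $\al_1^3 C_5 Q$ term and the $\frac{\sq{MN}}{R}\log Q$ term both emerge. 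The constant $\al_1$ is a free parameter governing the dyadic-splitting geometry: one caps $r$ at $\al_1 R$ on one side and pays $\al_1/(\al_1-1)$ for the geometric tail on the other.

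The main obstacle, and the place that needs the most care, is bookkeeping the explicit constants through the dyadic decomposition: I must track exactly how the $C_3, C_4, C_5$ from Lemma~\ref{lm0} combine with the $R'^2 + M$ shape of the large sieve, choose where to truncate in terms of $\al_1$, and verify that the geometric series in the dyadic parameter really does sum to the stated $\al_1/(\al_1-1)$ and $\al_1^3/(\al_1-1)$ factors without hidden absolute constants. The conceptual content is entirely standard (it is Proposition~13.x of Granville–Soundararajan made explicit), so the work is to redo their estimate keeping every inequality effective and to confirm that the terms $\frac{\sq{MN}}{R}\log Q$, $(\sq M + \sq N)\log^2 Q$, and $Q$ are exactly the three regimes that survive; I expect no surprises beyond the arithmetic of matching constants.
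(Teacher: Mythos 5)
Your proposal follows essentially the same route as the paper's proof: reduce to primitive characters of conductor $r\geq R$ with the imprimitivity multiplicity absorbed into the $C_3\log\log Q$ factor, apply Cauchy--Schwarz together with the multiplicative large sieve $(R'^2+M)\Abs{(a)}^2$ on blocks of conductors in geometric progression with ratio $\al_1$, and control the resulting multiplicative weights by the $C_4\log u$ and $C_5$ bounds of Lemma~\ref{lm0}, so the plan is correct in outline. The only bookkeeping difference is that in the paper the factors $C_4\log Q$ and $C_4\log^2 Q$ arise from summing $\mu^2(l)l/\vph^2(l)$ over the complementary divisor $l$ of $q$ (with the block decomposition in $r$ performed for each $l$ up to $Q/l$), rather than from partial summation in $r$ itself, but this does not change the substance of the argument.
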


\begin{proof}
Proceeding similarly to the proof of Proposition 13.2 in \cite[p.p.73--74]{GS}, we obtain
\begin{equation}
\begin{split}
&\sum_{q\leq Q}\sum_{\chi\pmod{q}}^{(\geq R)} \abs{\sum_{m=X+1}^{X+M}a_m\chi(m)}\abs{\sum_{n=Y+1}^{Y+N}b_n\chi(n)}\\
&\quad =\sum_{l\leq Q}\frac{\mu^2(l)}{\vph(l)}\sum_{R\leq r\leq\frac{Q}{l}, (r, l)=1}\frac{1}{\vph(r)}\sum_{\substack{s\leq Q/rl,\\p\mid s\imp p\mid rl}}\frac{1}{s}\\
&\qquad \times \sum_{\chi\pmod{r}}^* \abs{\sum_{\substack{X+1\leq m\leq X+M,\\ (m, l)=1}}a_m\chi(m)}\abs{\sum_{\substack{Y+1\leq n\leq Y+N,\\ (n, l)=1}}b_n\chi(n)}\\
&\quad<C_3\log\log Q
\sum_{l\leq Q}\frac{\mu^2(l)l}{\vph(l)}\sum_{\substack{y=\frac{Q}{l\al_1^i}, 0\leq i\leq I}}\frac{1}{y}\sum_{\al_1^{-1}y\leq r<y, (r, l)=1}\frac{r}{\vph(r)}\\
&\qquad \times \sum_{\chi\pmod{r}}^* \abs{\sum_{\substack{X+1\leq m\leq X+M,\\ (m, l)=1}}a_m\chi(m)}\abs{\sum_{\substack{Y+1\leq n\leq Y+N,\\ (n, l)=1}}b_n\chi(n)},
\end{split}
\end{equation}
where $I=\floor{\log(Q/lR)/\log\al_1}$.

Using Cauchy's theorem and large-sieve inequality, this is
\begin{equation}
\begin{split}
< & C_3\log\log Q \Abs{(a)}\Abs{(b)} \\
& \quad \times \sum_{l\leq Q}\frac{\mu^2(l)l}{\vph(l)}\sum_{\substack{y=\frac{Q}{l\al_1^i}, 0\leq i\leq I}}\frac{1}{y}
\left(\frac{\sq{MN}}{y}+(\sq{M}+\sq{N})+y\right)\\
\leq & C_3\log\log Q\Abs{(a)}\Abs{(b)}\\
& \quad \times \sum_{l\leq Q}\frac{\mu^2(l)l}{\vph(l)}
\left(\frac{\al_1}{\al_1-1}\left(\frac{\al_1\sq{MN}}{R}+\frac{Q}{l}\right)+(\sq{M}+\sq{N})\frac{\log\frac{Q}{lR}}{\log\al_1}\right)\\
\leq & C_3\log\log Q\Abs{(a)}\Abs{(b)}\\
& \quad \times \left(\frac{\al_1^2}{\al_1-1}C_4\left(\frac{\sq{MN}}{R}\log Q+\frac{1}{\log\al_1}(\sq{M}+\sq{N})\log^2 Q\right)+\frac{\al_1}{\al_1-1}C_5 Q\right).
\end{split}
\end{equation}
\end{proof}

\begin{lem}\label{lm2}
\begin{equation}
\sum_{q\leq Q}\frac{1}{\vph(q)}\sum_{\chi\pmod{q}}^{(\geq R)} \abs{\sum_{u\leq x}c_u\chi(u)}\leq AB(C_6 xR^{-1}+C_7 Q\sq{x})\log^2 x\log\log Q.
\end{equation}
\end{lem}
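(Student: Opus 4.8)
The plan is to follow the proof of the corresponding Proposition in Chapter~13 of \cite{GS}, making every estimate explicit by feeding the bilinear pieces into Lemma~\ref{lm1} and the elementary sums into Lemma~\ref{lm0}. Recall that, in the setting of that Proposition, $\sum_{u\le x}c_u\chi(u)$ can be written --- after a Vaughan-type identity applied to the longer variable and a dyadic splitting --- as a sum of $O(\log x)$ bilinear forms $\sum_{m\sim M}a_m\chi(m)\sum_{n\sim N}b_n\chi(n)$ with $\abs{a_m}\le A$, $\abs{b_n}\le B$ and $MN\le x$, together with a handful of ``Type~I'' remainders in which one of the two variables runs over a short interval. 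First I would bring the left-hand side to a shape where Lemma~\ref{lm1} applies: writing each $\chi\pmod q$ with $q\le Q$ through its primitive inducing character $\chi^\ast\pmod r$, $r\ge R$, $r\mid q$, collecting the terms with a given $r$ by putting $q=rs$, and using $1/\vph(q)\le 1/\vph(r)$ together with $q/\vph(q)<C_3\log\log q$ from Lemma~\ref{lm0}, the weight $1/\vph(q)$ gets replaced by $C_3\log\log Q$ times the same kind of arithmetic weight that appears in the proof of Lemma~\ref{lm1} ($\mu^2(l)l/\vph(l)$ over the divisor $l$ produced by M\"obius inversion of the surviving condition $(u,q)=1$, and $1/\vph(r)$, $1/s$ over $r$ and $s$).

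Next, for each of the $O(\log x)$ bilinear blocks I would apply Lemma~\ref{lm1}. Since $\Abs{(a)}\le A\sq{M}$ and $\Abs{(b)}\le B\sq{N}$, we have $\Abs{(a)}\Abs{(b)}\le AB\sq{MN}\le AB\sq{x}$, while $\Abs{(a)}\Abs{(b)}\sq{MN}\le AB\,MN\le AB\,x$ and $\log Q\le\log x$, so each block contributes at most a constant multiple of $AB(xR^{-1}\log x+Q\sq{x}\log^2 x)$; the ``unbalanced'' middle term of Lemma~\ref{lm1} (the one with $\sq{M}+\sq{N}$ and $\log^2 Q$) is negligible by comparison because the Vaughan splitting keeps $M$ and $N$ away from the extremes. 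Summing over the $O(\log x)$ blocks creates the announced extra factor $\log x$, with $C_3\log\log Q$ carried along, so the whole thing is $\le AB(C_6 xR^{-1}+C_7 Q\sq{x})\log^2 x\log\log Q$. The Type~I remainders I would bound more cheaply, by partial summation together with the P\'olya--Vinogradov inequality (equivalently, by Lemma~\ref{lm1} with one of $M,N$ of bounded size), which gives a smaller contribution absorbed into the same bound; $C_6$ and $C_7$ then emerge by combining $C_3,C_4,C_5$ and the factors $\al_1/(\al_1-1)$, $\al_1/\log\al_1$, $\al_1^3/(\al_1-1)$ from Lemmas~\ref{lm0} and \ref{lm1} with the losses from the dyadic decomposition and the coprimality sieving.

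The step I expect to be the real work is the first one: carrying out the passage to primitive characters while the extra weight $1/\vph(q)$ is present, so that the resulting $r$- and $s$-sums contribute only the permitted powers of $\log x$ and the single factor $\log\log Q$ and \emph{not} a spurious extra power of $Q$, and checking that the M\"obius-sieved coprimality condition $(u,q)=1$ really produces only the weight $\mu^2(l)l/\vph(l)$ that Lemma~\ref{lm0} controls. Everything after that is a routine, if lengthy, constant chase.
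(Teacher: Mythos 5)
There is a genuine gap at the centre of your argument. You treat the dyadic splitting as if it converted $\sum_{u\le x}c_u\chi(u)=\sum_{mn\le x}a_m\chi(m)b_n\chi(n)$ into $O(\log x)$ honest bilinear forms, i.e.\ products $\bigl(\sum_{m\sim M}a_m\chi(m)\bigr)\bigl(\sum_{n\sim N}b_n\chi(n)\bigr)$. It does not: inside a block $M<m\le 2M$ the other variable is constrained by $n\le x/m$, so the double sum over such a block does not factor into two separate character sums, and Lemma \ref{lm1}, which is stated for a rectangle $X<m\le X+M$, $Y<n\le Y+N$, cannot be applied to it as you claim. The paper's proof (following Proposition 13.6 of \cite{GS}) spends essentially all of its effort on exactly this point: for each dyadic $X=2^iR^2$ it partitions the hyperbolic region $\{mn\le x,\ X<m\le 2X\}$ into the rectangles $I_{j,k}$, applies Lemma \ref{lm1} to each rectangle, and explicitly sums the resulting quantities $\sum\sq{MN}$, $\sum M\sq{N}$, $\sum\sq{M}N$, $\sum MN$; the thin leftover strip along the hyperbola, where $x-2<mn\le x+2$, is the sum $S_2(X)$ and is bounded trivially using the explicit divisor estimate $d(l)<l^{1.06602/\log\log l}$ of Nicolas and Robin \cite{NR}. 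Nothing in your proposal plays the role of this rectangle decomposition or of the divisor-function bound for the boundary terms; the ``Type I remainders via P\'olya--Vinogradov'' you mention address short ranges created by Vaughan's identity, not the non-factoring condition $mn\le x$. Moreover, no Vaughan identity is used in this lemma at all: here $c=a*b$ is a single convolution with $\abs{a_m}\le A$, $\abs{b_n}\le B$ (the $\Lm_{R^2}$ decomposition is carried out later, in Section \ref{proof}), and the passage to primitive characters with the weights $\mu^2(l)l/\vph(l)$, $1/\vph(r)$, $1/s$ that you plan as the ``real work'' is already internal to Lemma \ref{lm1}, so redoing it gains nothing.

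A second, lesser defect: discarding the $(\sq{M}+\sq{N})\log^2Q$ term of Lemma \ref{lm1} on the grounds that ``the splitting keeps $M$ and $N$ away from the extremes'' is not admissible in a paper whose purpose is explicit constants. The paper carries these terms through the summation over the $I_{j,k}$ and over $X$ (they produce the $x^{3/4}$, $x/z_1$ and $\sq{x}\log x$ pieces in the bound for $\sum_X S_1(X)$) and only then checks numerically that everything fits under $AB(C_6xR^{-1}+C_7Q\sq{x})\log^2x\log\log Q$. As written, your argument neither yields the constants $C_6$, $C_7$ nor can it even begin until the hyperbola condition is handled.
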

\begin{proof}
We see that $\sum_{u\leq x}c_u\chi(u)=\sum_{mn\leq x}a_m\chi(m)b_n\chi(n)$.
As in the proof of Proposition 13.6 in \cite[p.p. 76--77]{GS}, we use the partition for $m$ in the range $X<m\leq 2X$.

Let $Y=x/X$ and $I_{j, k}$ be the interval
\begin{equation}
\left(1+\frac{2j}{2^k}\right)X<m\leq \min\left\{1+\frac{2j+1}{2^k}, 2\right\}X, \frac{Y}{1+\frac{2j+2}{2^k}}<n\leq\frac{Y}{1+\frac{2j+1}{2^k}}.
\end{equation}
Let $S_1(X)$ be the sum over the $I_{j, k}$'s with $0\leq j\leq 2^{k-1}-1, k=1, 2, \ldots, K$
and $S_2(X)$ be the sum over
\begin{equation*}
\left(1+\frac{j}{2^K}\right)X<m\leq \min\left\{1+\frac{j+1}{2^K}, 2\right\}X, \frac{Y}{1+\frac{j+1}{2^K}}<n\leq\frac{x}{m}
\end{equation*}
with $0\leq j\leq 2^K-1$.

By Lemma \ref{lm1}, the sum over the interval $X<m\leq X+M, Y<n\leq Y+N$ is at most
\begin{equation}
\begin{split}
&C_3\log\log Q AB\sq{MN}\\
&\quad\times\left(\frac{\al_1}{\al_1-1}C_4\frac{\sq{MN}}{R}\log Q+\al_1(\al_1-1)C_5 Q+\frac{\al_1-1}{\log\al_1}C_4(\sq{M}+\sq{N})\log^2 Q\right).
\end{split}
\end{equation}
For each interval $I_{j, k}$, we have $M\leq 2^{-k}X+1$ and $N\leq 2^{-k}Y+1$.
Running over the $I_{j, k}$'s with $0\leq j\leq 2^{k-1}-1, k=1, 2, \ldots, K$, we have
\begin{equation}
\sum\sq{MN}<\frac{1}{\log 2}\sq{XY}\log X+\frac{1}{2-\sq{2}}(X+\sq{XY})+X,
\end{equation}
\begin{equation}
\sum M\sq{N}<\frac{1}{2(\sq{2}-1)}X\sq{Y}+\frac{1}{2-\sq{2}}\left(\sq{XY}+\frac{X\sq{X}}{2\sq{Y}}\right)+\frac{1}{4-\sq{2}}\frac{X\sq{X}}{Y},
\end{equation}
\begin{equation}
\sum\sq{M}N<\frac{1}{2(\sq{2}-1)}\sq{X}Y+\frac{1}{2-\sq{2}}\left(X+\frac{Y}{2}\right)+\frac{1}{4-\sq{2}}X
\end{equation}
and
\begin{equation}
\sum MN<\frac{XY}{2}+\frac{1}{2\log 2}(X+Y)\log X+X.
\end{equation}

Taking the total upper bound, we have
\begin{equation}
\begin{split}
& S_1(X)\leq C_3 AB\log\log Q\\
&\quad <\biggl[
 \frac{\al_1^2C_4}{(\al_1-1)R}\log Q\left(
  \frac{x}{2}+\frac{1}{2\log 2}\left(X+\frac{x}{X}\right)\log X+X
 \right)\\
&\qquad +\frac{\al_1}{\al_1-1}C_5 Q\left(
  \frac{\sq{x}\log X}{4\log^2 2}+\frac{1}{2-\sq{2}}(X+\sq{x})+X
 \right)\\
&\qquad +\frac{C_4}{\log\al_1}\log^2 Q\\
&\qquad\quad\times\left\{
  \frac{\sq{2}(\sq{2}+1)}{2(\sq{2}-1)}\left(\sq{xX}+\frac{x}{\sq{X}}\right)\right.\\
&\qquad\qquad\left.+\frac{1}{2-\sq{2}}\left(X+\frac{x}{2X}+\sq{x}+\frac{X^2}{2\sq{x}}\right)+\frac{1}{4-\sq{2}}\left(X+\frac{X^2}{2\sq{x}}\right)
 \right\}
\biggr]
\end{split}
\end{equation}
for each $X$.  Summing this over $X=2^i R^2$ with $0\leq i\leq \log(\sq{x}/R^2)/\log 2$, we have 
\begin{equation}
\begin{split}
&\sum_X S_1(X)\leq C_3 AB\log\log Q\\
&\quad <\biggl[
 \frac{\al_1^2C_4}{(\al_1-1)R}\log Q\left(
  \frac{x\log x}{4\log 2}+\frac{1}{2\log 2}\left(\frac{2\log 2}{z_1(\al_1)^2}x+\frac{\sq{x}\log^2 x}{4\log 2}\right)+2\sq{x}
 \right)\\
&\qquad +\frac{\al_1}{\al_1-1}C_5 Q\left\{
  \frac{\sq{x}\log x}{4\log^2 2}\left(\log z_1+\frac{1}{4}\log x\right)\right.\\
&\left.\qquad\qquad+\left((3-\sq{2})(2+\sq{2})+\frac{1}{2-\sq{2}}\frac{\log x}{2\log 2}\right)\sq{x}
 \right\}\\
&\qquad +\frac{C_4}{\log\al_1}\log^2 Q\left\{
  \frac{\sq{2}(\sq{2}+1)}{2(\sq{2}-1)}\left(x^\frac{3}{4}+\frac{x}{z_1}\right)\right.\\
&\left.\qquad\qquad  +(2+\sq{2})\left(\frac{4\sq{x}}{3}+\frac{x}{2z_1}+\frac{\sq{x}\log x}{4\log 2}\right)+\frac{8}{3(4-\sq{2})}\sq{x}
 \right\}
\biggr].
\end{split}
\end{equation}

For the sum $S_2(X)$, we see that $x-2<(1+2^{-K})^{-1}x<mn\leq (1+2^{-K})x\leq x+2$.  Thus we have
\begin{equation}
S_2(X)\leq \sum_{q\leq Q}\frac{1}{\vph(q)}\sum_{\chi\pmod{q}}^{(\geq R)}AB\sum_{x-2<l\leq x+2}d(l)<ABQ\sum_{x-2<l\leq x+2}d(l).
\end{equation}

By \cite{NR}, we know that $d(l)<l^{1.06602/\log\log l}$ and therefore
\begin{equation}
\sum_X S_2(X)\leq 4ABQ\frac{\log x}{\log 2} (x+2)^\frac{1.6602}{\log\log (x+2)}<e^{-1000}ABQx^\frac{1}{2}\log^2 x\log\log Q.
\end{equation}
This completes the proof.
\end{proof}

\section{Proof of Theorem 1.1}\label{proof}
Let $g(n)$ be the totally multiplicative function defined by $g(p)=0$ if $p\leq R^2$ and $g(p)=1$ if $p>R^2$.
We define $a_n$ by $a_n=g(n)\mu(n)$ if $n>1$ and $a_1=0$ and $b_m=g(m)\log m$.
Let $G(x, \chi)=\sum_{n\leq x}g(n)\chi(n)\log n, G(x; q, a)=\sum_{n\leq x, n\equiv a\pmod{q}}g(n)\log n$.

Now we shall proceed as in p.132 of \cite{GS2}.
We observe that $c_n=\Lm_{R^2}(n)-g(n)\log n$ and therefore we have, using Lemma \ref{lm2},
\begin{equation}\label{eq31}
\sum_{q\leq Q}\frac{1}{\vph(q)}\sum_{\chi}^{(\geq R)}\abs{\sum_{u\leq x}c_u\chi(u)}\leq (C_6 xR^{-1}+C_7Q\sq{x})\log^3 x\log\log Q.
\end{equation}

Using a trivial estimate
\begin{equation}
\begin{split}
\abs{\sum_{n\leq x}\Lm(n)\chi(n)-\Lm_{R^2}(n)\chi(n)}\leq&\sum_{n\leq x}\abs{\Lm(n)-\Lm_{R^2}(n)}\leq\pi(R^2)\log x\\
<& 2R^2\frac{\log x}{\log R}<C_8\sq{x}\log^3 x\log\log Q,
\end{split}
\end{equation}
we have
\begin{equation}\label{eq32}
\sum_{q\leq Q}\frac{1}{\vph(q)}\sum_{\chi\pmod{q}}^{(\geq R)}\abs{\sum_{n\leq x}\Lm(n)\chi(n)-\Lm_{R^2}(n)\chi(n)}< C_8Q\sq{x}\log^3 x\log\log Q.
\end{equation}

Using (\ref{eq32}), we can replace $c_u$ in (\ref{eq31}) by $\Lm(u)-g(u)\log u$ to obtain
\begin{equation}
\begin{split}
&\sum_{q\leq Q}\frac{1}{\vph(q)}\sum_{\chi\pmod{q}}^{(\geq R)}\abs{\sum_{n\leq x}\Lm(n)\chi(n)-g(n)\chi(n)\log n}\\
&\quad<(C_6 xR^{-1}+(C_7+C_8)Q\sq{x})\log^3 x\log\log Q
\end{split}
\end{equation}
and therefore
\begin{equation}\label{eq33}
\begin{split}
\sum_{q\leq Q}\abs{\psi^{(R)}(x; q, a)-G^{(R)}(x; q, a)}\leq & \sum_{q\leq Q}\frac{1}{\vph(q)}\sum_{\chi\pmod{q}}^{(\geq R)}\abs{\psi(x, \chi)-G(x, \chi)}\\ < & (C_6 xR^{-1}+(C_7+C_8)Q\sq{x})\log^3 x\log\log Q.
\end{split}
\end{equation}

We observe that
\begin{equation}
\begin{split}
G^{(R)}(x; q, a)= & G(x; q, a)-\frac{1}{\vph(q)}\sum_{r\leq R, r\mid q}\sum_{\chi\pmod{q}}^{r}\bar\chi(a)\sum_{\substack{b\pmod{q},\\ (b, q)=1}}\chi(b)G(x; q, b)\\
= & \frac{G(x)}{\vph(q)}+G^{(1)}(x; q, a)\\
 & -\frac{1}{\vph(q)}\sum_{r\leq R, r\mid q}\sum_{\chi\pmod{q}}^{r}\bar\chi(a)\sum_{\substack{b\pmod{q},\\ (b, q)=1}}\chi(b)\left(\frac{G(x)}{\vph(q)}+G^{(1)}(x; q, b)\right)\\
= & G^{(1)}(x; q, a)-\frac{1}{\vph(q)}\sum_{r\leq R, r\mid q}\sum_{\chi\pmod{q}}^{r}\bar\chi(a)\sum_{\substack{b\pmod{q},\\ (b, q)=1}}\chi(b)G^{(1)}(x; q, b)
\end{split}
\end{equation}
to obtain
\begin{equation}\label{eq34}
\begin{split}
\sum_{q\leq Q}\abs{G^{(R)}(x; q, a)}\leq& \sum_{q\leq Q}\abs{G^{(1)}(x; q, a)}+\sum_{q\leq Q}\frac{1}{\vph(q)}\sum_{r\leq R, r\mid q}\sum_{\chi\pmod{q}}^{(r)}\sum_{\substack{b\pmod{q},\\ (b, q)=1}}\abs{G^{(1)}(x; q, b)}\\
=& \sum_{q\leq Q}\abs{G^{(1)}(x; q, a)}+\sum_{r\leq R}\sum_{q\leq Q, r\mid q}\frac{1}{\vph(q)}\sum_{\substack{b\pmod{q},\\ (b, q)=1}}\abs{G^{(1)}(x; q, b)}.
\end{split}
\end{equation}

Now we shall bound $G^{(1)}(x; q, b)$ for each congruent class $b\pmod{q}$.

\begin{lem}\label{lm3}
Let $c=c(A)$ be a constant depending only on $A$.
If $x\geq \exp cA^2\log A$, then
\begin{equation}
G^{(1)}(x; q, b)<\left(1+\frac{1}{\log x}\right)\frac{x\log\log x}{e^\gamma \vph(q)\log^{A+2} x}+\left(1+\frac{1}{\vph(q)}\right)\frac{x}{Q\log^{A+1}x}.
\end{equation}

We can take $c$ as in table \ref{tbl1} for $A=2, 3, \ldots, 7$ and $c=17$ for $A>e^{160.51440939}$.

\end{lem}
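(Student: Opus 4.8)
Since $G^{(1)}(x;q,b)=G(x;q,b)-\frac{1}{\varphi(q)}\sum_{n\le x,\ (n,q)=1}g(n)\log n$ by the definition of $F^{(R)}$ (with $R=1$ the inner sum runs over the principal character only), and since $g(n)\neq 0$ exactly when every prime factor of $n$ exceeds $R^{2}$, both terms on the right are $\log$--weighted counting functions of $R^{2}$--rough integers --- one over the residue class $b\bmod q$, the other over integers prime to $q$. The plan is to estimate each of them by a small (combinatorial, Brun type) sieve: write $\mathbf 1[P^{-}(n)>R^{2}]=\sum_{d\mid n,\ p\mid d\imp p\le R^{2}}\mu(d)$, truncate the $d$--sum at a level $D$ with a Brun truncation so that the tail is the fundamental--lemma error, and evaluate the resulting inner sums $\sum_{n\le x,\ n\equiv a\,(\bmod\,dq)}\log n=\frac{x\log x-x+1}{dq}+O(\log x)$ by Euler--Maclaurin. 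This makes the main term of $G(x;q,b)$ equal to $\frac{x\log x-x+1}{q}\prod_{p\le R^{2},\,p\nmid q}(1-\frac1p)$; handling the coprimality to $q$ separately (an exact Möbius inversion over the prime factors of $q$, with the same Brun truncation applied only to the condition $P^{-}(\cdot)>R^{2}$) gives the second term the \emph{same} main term, since $\varphi(q)=q\prod_{p\mid q}(1-1/p)$. Consequently $G^{(1)}(x;q,b)$ is the difference of the two error terms, and it remains only to bound these explicitly.

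The errors split into an arithmetic remainder and a sieve remainder. The arithmetic part, arising from the $O(\log x)$ errors in the sums over residue classes modulo $dq$ with $d\le D$, is $O(D\log^{2}x)$; since $Q=x^{1/2}/\log^{A+3}x$, choosing $D$ a little below $x^{1/2}$ makes $D\log^{2}x\ll x^{1/2}\log^{2}x=x\,(Q\log^{A+1}x)^{-1}$, the second displayed quantity (up to the factor $1+1/\varphi(q)$ coming from the $1/\varphi(q)$ weight on the coprime sum). The sieve remainder is the part of the two Brun--truncation errors that does not cancel; I would expect the bulk of it to cancel as well, the two sums being the same truncated indicator applied over a progression and over the coprime integers, so that with the sieve degree $s=\log D/\log R^{2}$ taken as large as $D<x^{1/2}$ permits, the residual is at most roughly $e^{-s}$ times the common main term, and inserting the explicit Mertens estimate $\prod_{p\le R^{2}}(1-\frac1p)\le e^{-\gamma}(\log R^{2})^{-1}\bigl(1+O((\log R^{2})^{-2})\bigr)$ of Rosser and Schoenfeld (the source already used in Lemma \ref{lm0}) together with $q/\varphi(q)<C_{3}\log\log q$ turns it into $(1+1/\log x)\,x\log\log x\,(e^{\gamma}\varphi(q)\log^{A+2}x)^{-1}$; the factor $1+1/\log x$ absorbs the passage from $x\log x-x+1$ to $x\log x$ and the lower order terms of Mertens' estimate. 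Here $R$ is the auxiliary parameter of the construction, of size $\asymp\log^{A+3}x$, and the hypothesis $x\ge\exp(cA^{2}\log A)$ is what makes every range legitimate: $R>Q_{0}$ so that Lemma \ref{lm0} applies, $R^{2}<D<x^{1/2}<x$, and $s=\log D/\log R^{2}$ large enough for the fundamental--lemma error to beat the required power of $\log x$. The tabulated values of $c$ are obtained by making these inequalities quantitative in PARI--GP, and the factor $A^{2}\log A$ reflects that $R^{2}\asymp\log^{2A+6}x$ must sit sufficiently far below $x$ for $s$ to have room to grow.

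The main obstacle is producing a fully explicit combinatorial sieve with numerical constants and then carrying the $q$--dependence through so that it collapses to the clean shapes $1/\varphi(q)$ and $1+1/\varphi(q)$: the coprimality conditions and the radical of $q$ have to be propagated through the truncated Euler products $\sum_{d\le D,\ (d,q)=1}\mu(d)/d$ and through the arithmetic remainders, using the estimates of Lemma \ref{lm0} for $\sum 1/\varphi(n)$, $\sum\mu^{2}(n)n/\varphi^{2}(n)$ and $q/\varphi(q)$, and the level $D$ must then be optimised so that both error terms fall below the two displayed quantities at once --- it is this interplay, together with the requirement $R>Q_{0}$, that pins down the threshold $\exp(cA^{2}\log A)$. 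A secondary point is the partial summation that reinstates the weight $\log n$ from the rough--number counting function: the sieve asymptotic is usable only for $n$ past a small power of $R^{2}$, so one must check that the very small $n$, where only the trivial bound is available, contribute negligibly.
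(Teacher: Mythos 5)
Your overall strategy coincides with the paper's: view $G^{(1)}(x;q,b)$ as the discrepancy between the $\log$-weighted count of $R^2$-rough integers in the class $b\pmod q$ and its average over the reduced classes, estimate both by a fundamental-lemma type sieve of level about $x^{1/2}$ so that the main terms cancel, and be left with a sieve error (giving the first displayed term, via the Rosser--Schoenfeld bound for $V(R^2)=\prod_{p\le R^2}(1-1/p)$) plus a level term (giving the $x/(Q\log^{A+1}x)$ part). The gap is in the execution, and it is the one you yourself flag: you never produce the ``fully explicit combinatorial sieve with numerical constants,'' and the quantitative surrogate you offer --- that the residual is ``at most roughly $e^{-s}$ times the common main term'' --- is not strong enough to yield the lemma at the stated thresholds. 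The saving needed over the main term is of order $\log^{-(A+3)}x$ (up to $\log\log$ factors), while at the threshold $x=\exp(cA^2\log A)$ the sieve degree is only $s\approx\log x/(4(A+3)\log\log x)$, so a bare $e^{-s}$ clears $\log^{-(A+3)}x$ with essentially no margin; the constant and polynomial-in-$s$ prefactors that every explicit fundamental lemma carries (in the paper's case a factor of the shape $\left[2\left(4+s/\log 2s\right)\right]^5\cdot 2s$) would consume that margin. One genuinely needs the $e^{-(1-\ep_2)s\log s}$ decay, and it is precisely the explicit inequality comparing $(1-\ep_1)(1-\ep_2)s\log s$ (plus the $2\log\log R$ coming from $V(R^2)$) with $(A+3)\log(cA^2\log A)$ that pins down the values of $c$ in Table \ref{tbl1}; without that step the table, i.e.\ the actual content of the lemma, is not proved.

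The paper fills this hole not by constructing a Brun sieve from scratch but by quoting an off-the-shelf explicit fundamental lemma (Greaves, Theorem 3.3.1) for the unweighted counting function $N(y,R^2;q,b)$ of $R^2$-rough integers in the progression, with level $D=x/(Q\log^{A+2}x)=x^{1/2}\log x$ and $s=\log D/(2\log R)$, and then recovers the weight $\log n$ by partial summation, $G(x;q,b)=(\log x)N(x,R^2;q,b)-\int_1^x N(t,R^2;q,b)\,t^{-1}dt$; this also disposes of your ``secondary point'' about reinstating the logarithm, since no sieve asymptotic for small $n$ is needed. Your Euler--Maclaurin treatment of $\sum_{n\le x,\,n\equiv a\ (\mathrm{mod}\ dq)}\log n$ is a workable alternative to that partial summation, so the route is sound in outline; but as written the proposal defers exactly the decisive explicit sieve estimate and the resulting numerical determination of $c$, which is where the whole difficulty of the lemma lies.
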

\begin{table}
\caption{constants in Lemma \ref{lm3}}\label{tbl1}
\begin{center}
\begin{small}
\begin{tabular}{| c | c |}
 \hline
$A$ & $cA^2\log A$ \\
 \hline
$2$ & $6978$ \\
$3$ & $9805$ \\
$4$ & $13116$ \\
$5$ & $16912$ \\
$6$ & $21193$ \\
$7$ & $25962$ \\
 \hline
\end{tabular}
\end{small}
\end{center}
\end{table}

\begin{proof}
Let $V(z)=\prod_{p\leq z}(1-1/p)$ and $N(y, z; q, a)$ be the number of integers $n\leq y, n\equiv a\pmod{q}$ such that 
$n$ has no prime factor $\leq z$.

We see from \cite[Theorem 8, p. 70]{RS} that for any $z>w>1$, we have
\begin{equation}
\prod_{w\leq p<z}\frac{p}{p-1}<2\frac{\log z}{\log w}.
\end{equation}

Let $D=\frac{x}{Q\log^{A+2} x}=x^\frac{1}{2}\log x$,
\begin{equation}
s=\frac{\log D}{2\log R}=\frac{\log x}{4(A+3)\log\log x}+\frac{1}{2A+6}
\end{equation}
and $b=4+s/\log(2s)$.

Now, confirming that $1-e^{1+1/b}/b>1/e$ and $s>b+1$ for our choices of $c$ and $A$,
\cite[Theorem 3.3.1, p. 91]{Gre} gives
\begin{equation}
\begin{split}
&\abs{N(y, R^2; q, b)-\frac{y}{\vph(q, R^2)}V(R^2)}\leq \\
&\qquad\frac{y}{\vph(q, R^2)}V(R^2)\left[2\left(4+\frac{s}{\log 2s}\right)\right]^5 Ks e^{s(-\log s+3+\log\log K)}+D,
\end{split}
\end{equation}
where $\vph(q, R^2)=q\prod_{p\mid q, p>R^2}(1-p^{-1})$.  Clearly $\vph(q, R^2)\geq\vph(q)$.

Since
\begin{equation}
G(x; q, b)=(\log x)N(x, R^2; q, b)-\int_1^x \frac{N(t, R^2; q, b)}{t}dt
\end{equation}
and
\begin{equation}
G^{(1)}(x; q, b)=G(x; q, b)-\frac{1}{\vph(q)}G(x),
\end{equation}
we have
\begin{equation}
\begin{split}
&\abs{G^{(1)}(x; q, b)}\\
\leq &(\log x)\abs{N(x, R^2; q, b)-\frac{1}{\vph(q)}N(x, R^2)}
+\int_1^x\abs{\frac{N(t, R^2; q, b)}{t}-\frac{N(t, R^2)}{t\vph(q)}}dt\\
\leq &(1+\log x)\frac{2x}{\vph(q)}V(R^2)\left[2\left(4+\frac{s}{\log 2s}\right)\right]^5 2s e^{s(-\log s+3+\log\log 2)}\\
&+D\left(1+\frac{1}{\vph(q)}\right)\log x.
\end{split}
\end{equation}

Let
\begin{equation}
\ep_1=1-\frac{4(A+3)s\log s}{\log x}
\end{equation}
and
\begin{equation}
\ep_2=\frac{s(3+\log\log 2)+\log 2s+5\log\left[2\left(4+\frac{s}{\log 2s}\right)\right]}{s\log s}.
\end{equation}
Then we can easily see that $\ep_1, \ep_2$ tend to zero as $c$ tends to infinity.

Now we take $c$ so that
\begin{equation}
\begin{split}
&\frac{c(1-\ep_1)(1-\ep_2)A^2\log A}{4(A+3)}+2\log\log R-\log(1+8\log^{-2} R)\\
&\qquad\qquad>(A+3)\log(cA^2\log A).
\end{split}
\end{equation}

Since 
\begin{equation}
s\log s=\frac{\log x}{4(A+3)}(1-\ep_1)>\frac{c(1-\ep_1)A^2\log A}{4(A+3)},
\end{equation}
\begin{equation}
e^{(1-\ep_2)s\log s}=\left[2\left(4+\frac{s}{\log 2s}\right)\right]^5 2s e^{s(-\log s+3+\log\log 2)}
\end{equation}
and $V(R^2)<e^{-\gamma}(1+1/8\log^2 R)/2\log R$ by \cite[Theorem 7, (3.26), p. 70]{RS},
we have
\begin{equation}
\begin{split}
&G^{(1)}(x; q, b)\\
<&\left(1+\frac{1}{\log x}\right)\frac{x\log\log x}{e^\gamma \vph(q, R^2)\log^{A+2} x}+D\left(1+\frac{1}{\vph(q, R^2)}\right)\log x\\
\leq& \left(1+\frac{1}{\log x}\right)\frac{x\log\log x}{e^\gamma \vph(q)\log^{A+2} x}+\left(1+\frac{1}{\vph(q)}\right)\frac{x}{Q\log^{A+1}x},
\end{split}
\end{equation}
as stated in the Lemma.
\end{proof}

Now the assumption $x\geq x_0$ allows us to apply Lemma \ref{lm3} since Table 4 in \cite{Ymd} implies $x_0>\exp (cA^2\log A)$ for any $A$.
For the first sum in (\ref{eq34}), seeing that $\sum_{q\leq Q}\frac{1}{\vph(q)}<C_2(1+\log Q)<\frac{C_2}{2}(\log x-1)$, we have
\begin{equation}\label{eq35}
\sum_{q\leq Q}\abs{G^{(1)}(x; q, a)}< \frac{C_2x\log\log x}{2e^\gamma\log^{A+1} x}+\left(1+\frac{C_2\log x}{2Q}\right)\frac{x}{\log^{A+1} x}.
\end{equation}
For the second sum in (\ref{eq34}), we apply Lemma \ref{lm3} to $G^{(1)}(x; q, b)$ for each congruent class $b\pmod{q}$ to obtain
\begin{equation}\label{eq36}
\begin{split}
& \sum_{r<R}\sum_{q\leq Q, r\mid q}\frac{1}{\vph(q)}\sum_{\substack{b\pmod{q},\\ (b, q)=1}}\abs{G^{(1)}(x; q, b)} \\
&\quad \leq \sum_{r<R}\sum_{q\leq Q, r\mid q}\max_{b\pmod{q}, (b, q)=1}\abs{G^{(1)}(x; q, b)}\\
&\quad <\sum_{r<R}\sum_{q\leq Q, r\mid q}\left(1+\frac{1}{\log x}\right)\frac{x\log\log x}{e^\gamma \vph(q)\log^{A+2} x}+\left(1+\frac{1}{\vph(q)}\right)\frac{x}{Q\log^{A+1}x}\\
&\quad <\sum_{r<R}\frac{C_2x\log\log x}{2\vph(r)e^\gamma \log^{A+1} x}+\left(\frac{1}{r}+\frac{C_2\log x}{2Q\vph(r)}\right)\frac{x}{\log^{A+1}x}\\
&\quad <\frac{C_2^2(1+(A+3)\log\log x)x\log\log x}{2e^\gamma \log^A x}+\\
&\quad \left(1+(A+3)\log\log x+\frac{C_2^2(1+(A+3)\log\log x)\log x}{2Q}\right)\frac{x}{\log^{A+1}x}.
\end{split}
\end{equation}

With the aid of (\ref{eq35}) and (\ref{eq36}), (\ref{eq33}) gives
\begin{equation}
\sum_{q\leq Q}\abs{\psi^{(R)}(x; q, a)}\leq(C_6 xR^{-1}+(C_7+C_8)Q\sq{x})\log^3 x\log\log Q+C_9\frac{x(\log\log x)^2}{\log^A x}.
\end{equation}
Since $R=\log^{A+3} x$ and $Q=\sq{x}/\log^{A+3} x$, we have
\begin{equation}\label{eq37}
\sum_{q\leq Q}\abs{\psi^{(R)}(x; q, a)}<\left(\frac{C_6+C_7+C_8}{\log\log x}+C_9\right)\frac{x(\log\log x)^2}{\log^A x}.
\end{equation}

Now it suffices to majorize
\begin{equation}\label{eq38}
\sum_{q\leq Q, q_0\nmid q}\abs{\psi^{(R)}(x; q, a)-\psi^{(1)}(x; q, a)}=\sum_{q\leq Q, q_0\nmid q}\abs{\frac{1}{\vph(q)}\sum_{\substack{1<r\leq R,\\ r\mid q}}\sum^{(r)}\bar\chi(a)\psi(x, \chi)}.
\end{equation}

The proof of Theorem 3.6 of \cite{MC} shows that the left-hand side quantity $\frac{\vph(k)}{x}\abs{\psi(x; k, l)-\frac{x}{\vph(k)}}$
in this theorem can be replaced by $-1+x^{-1}\sum_{\chi\pmod{k}}\abs{\psi(x, \chi)}$.  This also applies to Theorem 1.1 of \cite{Ymd}.

Now, proveded that $x\geq x_0$, Theorem 1.1 of \cite{Ymd} gives
\begin{equation}\label{eq39}
\sideset{}{^*}\sum_{\chi\pmod{q}}\abs{\psi(x, \chi^*)}<\frac{C_0x\log\log x}{\log^{A+1} x}+E_0\left(\frac{x^{-\beta_0}}{1-\beta_0}+\frac{x^{\beta_0-1}}{\beta_0}\right),
\end{equation}
where $E_0=1$ and $\beta_0$ denote the Siegel zero modulo $q$ if it exists and $E_0=0$ otherwise.

If $q\leq R$ is non-exceptional, then the right-hand side of (\ref{eq39}) is at most
\begin{equation}
\frac{C_0\log\log x}{\log^{A+1} x}+\frac{x^{-\frac{1}{2(A+3)R_1\log\log x}}}{1-\frac{1}{2(A+3)R_1\log\log x}}+2x^\frac{1}{2}.
\end{equation}
We can easily confirm that, provided that $x\geq x_0$,
\begin{equation}
\frac{x^{-\frac{1}{2(A+3)R_1\log\log x}}}{1-\frac{1}{2(A+3)R_1\log\log x}}+2x^\frac{1}{2}<e^{-100}\frac{x\log\log x}{\log^{A+1} x}.
\end{equation}

On the other hand, if $q\leq \log^\frac{3}{2} x$ is exceptional, then, using Theorem 3 of \cite{LW}, the right-hand side of (\ref{eq39}) is at most
\begin{equation}
\frac{C_0\log\log x}{\log^{A+1} x}+\frac{x^{-\frac{4\pi}{9\times 0.4923\log^\frac{1}{4} x(\log\log x)^2}}}{1-\frac{4\pi}{9\times 0.4923\log^\frac{1}{4} x(\log\log x)^2}}+2x^\frac{1}{2}.
\end{equation}
We can easily confirm that, provided that $x\geq x_0$,
\begin{equation}
\frac{x^{-\frac{4\pi}{9\times 0.4923\log^\frac{1}{4} x(\log\log x)^2}}}{1-\frac{4\pi}{9\times 0.4923\log^\frac{1}{4} x(\log\log x)^2}}+2x^\frac{1}{2}<e^{-2000}\frac{x\log\log x}{\log^{A+1} x}.
\end{equation}

Hence (\ref{eq38}) gives
\begin{equation}
\begin{split}
\sum_{q\leq Q}\abs{\psi^{(R)}(x; q, a)-\psi^{(1)}(x; q, a)}
< & \sum_{q\leq Q}\frac{1}{\vph(q)}\sum_{\substack{1<r\leq R,\\ r\mid q}}\left(\vph^*(r)\omega(q)\log q+\frac{(C_0+e^{-100})x\log\log x}{\log^{A+1} x}\right)\\
= & \sum_{1<r\leq R}\vph^*(r)\sum_{\substack{q\leq Q.\\ r\mid q}}\frac{\omega(q)\log q}{\vph(q)}\\
& +\frac{(C_0+e^{-100})x\log\log x}{\log^{A+1} x}\sum_{1<r\leq R}\sum_{\substack{q\leq Q.\\ r\mid q}}\frac{1}{\vph(q)}.
\end{split}
\end{equation}
for $x\geq x_0$.

Since $\omega(n)<1.3841\log n/\log\log n$ by \cite[Theorem 11]{Rob}, the contribution of the first term is
\begin{equation}
\begin{split}
\leq & \frac{1.3841\log^2 Q}{\log\log Q}\sum_{1<r\leq R}\vph^*(r)\sum_{\substack{q\leq Q,\\ r\mid q}}\frac{1}{\vph(q)}\\
\leq & \frac{1.3841\log^2 Q}{\log\log Q}\sum_{1<r\leq R}\frac{\vph^*(r)}{\vph(r)}\sum_{\substack{l\leq \frac{Q}{r}}}\frac{1}{\vph(l)}\\
<&\frac{1.3841\log^2 Q}{\log\log Q}R C_2(1+\log Q)
<C_{10}\frac{x(\log\log x)^2}{\log^A x}.
\end{split}
\end{equation}
The contribution of the second term is
\begin{equation}\label{eq310}
\begin{split}
\leq & \frac{Cx\log\log x}{\log^{A+1} x}\sum_{1<r\leq R}\frac{1}{\vph(r)}\sum_{\substack{q\leq \frac{Q}{r}}}\frac{1}{\vph(q)}\\
<& \frac{(C_0+e^{-100})x\log\log x}{\log^{A+1} x}C_2^2\log R(1+\log Q) \\
<& \frac{(A+3)(C_0+e^{-100})C_2^2x(\log\log x)^2}{2\log^A x}.
\end{split}
\end{equation}
Thus we have
\begin{equation}
\sum_{q\leq Q}\abs{\psi^{(R)}(x; q, a)-\psi^{(1)}(x; q, a)}<\frac{(C_{10}+(A+3)(C_0+e^{-100})C_2^2)x(\log\log x)^2}{2\log^A x}.
\end{equation}
Combined with (\ref{eq37}), this yields
\begin{equation}\label{eq311}
\sum_{q\leq Q}\abs{\psi^{(1)}(x; q, a)}<\left(\frac{C_6+C_7+C_8}{\log\log x}+C_9+C_{10}+(A+3)(C_0+e^{-100})C_2^2\right)\frac{x(\log\log x)^2}{2\log^A x}.
\end{equation}

We take $\al_1$ from Table \ref{tbl2} and, for $A\in\{2, 3, 4, 5, 6, 7\}$ and $x\geq x_1$, we have
upper bounds given in Table \ref{tbl2} for $C_6, C_7$ and $C_9$.  Moreover, we see that $C_8, C_{10}<e^{-2000}$.

\begin{table}
\caption{upper bounds for constants in (\ref{eq311}) and Theorem \ref{thm1}}\label{tbl2}
\begin{center}
\begin{small}
\begin{tabular}{| c | c | c | c | c | c |}
 \hline
$A$ & $\al$ & $C_6$ & $C_7$ & $C_9$ & $C_1$ \\
 \hline
$2$ & $3.24$ & $4.3925995$ & $1.0176341$ & $5.4966132$ & $6.1079032$ \\
$3$ & $3.24$ & $4.3850253$ & $1.0100483$ & $6.5494453$ & $7.1364631$ \\
$4$ & $3.23$ & $4.3779004$ & $1.0055896$ & $7.6038431$ & $8.1716266$ \\
$5$ & $3.23$ & $4.3733770$ & $1.0009200$ & $8.6593226$ & $9.2113377$ \\
$6$ & $3.23$ & $4.3696917$ & $0.9971007$ & $9.7155886$ & $10.254346$ \\
$7$ & $3.23$ & $4.3666139$ & $0.9939087$ & $10.772447$ & $11.299829$ \\
 \hline
\end{tabular}
\end{small}
\end{center}
\end{table}

Now we can easily confirm that $\frac{C_6+C_7+C_8}{\log\log x}+C_9+C_{10}<C_1$ for $C_1$ given in Table \ref{tbl2}.
This completes the proof.

\section{}
\begin{lem}
Moreover, for $x\geq 7920$, we have
\begin{equation}\label{eq41}
\sum_{n\leq x}\frac{\mu^2(n)}{\vph(n)}\leq C_{11}+\log x
\end{equation}
where
\begin{equation}
C_{11}=1.334.
\end{equation}
\end{lem}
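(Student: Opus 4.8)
The plan is to make explicit the classical evaluation $\sum_{n\le x}\mu^2(n)/\varphi(n)=\log x+K+o(1)$, where $K=\gamma+\sum_p\frac{\log p}{p(p-1)}$ is Landau's constant, and then to check that the error term stays below $C_{11}-K$ once $x\ge 7920$. Numerically $K=1.33258\ldots$ while $C_{11}=1.334$, so the room available for the error term is only about $1.4\cdot 10^{-3}$; this tightness is the whole point, and it is what forces the explicit threshold $7920$. I would first unfold the sum using $n/\varphi(n)=\sum_{d\mid n}\mu^2(d)/\varphi(d)$: writing each squarefree $n$ uniquely as $n=dm$ with $d$ squarefree and $(d,m)=1$ gives
\[
\sum_{n\le x}\frac{\mu^2(n)}{\varphi(n)}
=\sum_{d\le x}\frac{\mu^2(d)}{d\,\varphi(d)}\sum_{\substack{m\le x/d\\ (m,d)=1}}\frac{\mu^2(m)}{m}.
\]

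The outer weight $\mu^2(d)/(d\varphi(d))$ is nonnegative with $\sum_d\mu^2(d)/(d\varphi(d))=C_2$, so it is legitimate to estimate the inner sum $M_d(y):=\sum_{m\le y,\ (m,d)=1}\mu^2(m)/m$ termwise, provided the estimate is uniform enough in $d$ to be resummed against this weight. For $M_d(y)$ I would use $\mu^2(m)=\sum_{a^2\mid m}\mu(a)$ together with the sieve identity $[(l,d)=1]=\sum_{e\mid(l,d)}\mu(e)$ to reduce everything to harmonic sums, then insert the elementary bound $\sum_{l\le z}1/l=\log z+\gamma+O(1/z)$ with explicit constants (as in Lemma \ref{lm0}), obtaining
\[
M_d(y)=\frac{6}{\pi^2}\Bigl(\prod_{p\mid d}\frac{p}{p+1}\Bigr)(\log y+\gamma)-c(d)+E_d(y),
\]
where $c(d)$ is an explicit constant and $E_d(y)$ is an explicit error of size $O(\tau(d)/\sqrt y)$ coming from the tail $\sum_{a>\sqrt y}\mu(a)/a^2$ and from the remainders in the harmonic sums; here $\frac{6}{\pi^2}\prod_{p\mid d}\frac{p}{p+1}$ is simply the density of squarefree integers coprime to $d$.

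Substituting this back and interchanging the now absolutely convergent sums, the arithmetic constants assemble exactly: the coefficient of $\log x$ becomes
\[
\frac{6}{\pi^2}\sum_d\frac{\mu^2(d)}{d\,\varphi(d)}\prod_{p\mid d}\frac{p}{p+1}
=\frac{6}{\pi^2}\prod_p\Bigl(1+\frac{1}{(p-1)(p+1)}\Bigr)
=\frac{6}{\pi^2}\prod_p\frac{p^2}{p^2-1}=1,
\]
the constant term collapses to $K=\gamma+\sum_p\frac{\log p}{p(p-1)}$, and what remains is $\mathcal E(x):=\sum_d\frac{\mu^2(d)}{d\varphi(d)}E_d(x/d)$, a sum of nonnegative weights against $O(\tau(d)/\sqrt{x/d})$ that I would bound completely explicitly using the estimates of Lemma \ref{lm0} ($\sum_n\mu^2(n)/\varphi^2(n)<C_5$, $\sum_{n\le u}\mu^2(n)n/\varphi^2(n)\le C_4\log u$, $\pi(x)<2x/\log x$) and the divisor bound already used in the paper; this gives $\mathcal E(x)=O(1/\sqrt x)$ with an explicit, positive, decreasing bound. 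Thus $\sum_{n\le x}\mu^2(n)/\varphi(n)\le\log x+K+\mathcal E(x)$, and comparing $\mathcal E(x)$ with $C_{11}-K$ fixes a concrete $X_0$; for the finitely many squarefree $N$ with $7920\le N\le X_0$ the inequality $\sum_{n\le N}\mu^2(n)/\varphi(n)\le\log N+C_{11}$ is then confirmed by direct computation, exactly as with the other constants in the paper.

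The step I expect to be the main obstacle is keeping $\mathcal E(x)$ genuinely small: since the admissible error $C_{11}-K$ is only about $1.4\cdot 10^{-3}$, none of the estimates above can be wasteful, and in particular the cancellation present in $M_d(y)$ — both in the squarefree-indicator sieve and in the tail of $\sum_a\mu(a)/a^2$ — must be retained rather than replaced by absolute values when $d$ is comparable to $x$. The value $7920$ should come out as precisely the smallest point past which the carefully estimated $\mathcal E(x)$, supplemented if necessary by a finite numerical check below it, can be made to lie under $C_{11}-K$.
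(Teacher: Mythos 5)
Your strategy is sound and would prove the lemma, but it is organized differently from the paper, whose own proof is only two lines: it specializes the interval estimate (\ref{eq22}), already established inside the proof of Lemma \ref{lm0}, to $l=1$, giving $\sum_{x_1<q\le x}\mu^2(q)/\vph(q)=\log(x/x_1)+\theta\bigl[\tfrac{\zeta(3/2)}{\zeta(3)}\log(x/x_1)/\sq{x}+B_5\bigl(\sq{x}/x_1+1/\sq{x}\bigr)\bigr]$ with $B_5=\prod_p\bigl(1+\tfrac{2}{\sq{p}(p-1)}\bigr)$, and then pins down the constant $1.334$ and the threshold $7920$ by computation of partial sums; the limiting constant is never named there. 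You instead re-derive an explicit asymptotic from scratch and identify the limit as Landau's constant $K=\gamma+\sum_p\log p/(p(p-1))\approx 1.33258$, which is genuinely informative: it explains why $1.334$ works and why the margin is only about $1.4\times 10^{-3}$. Note, however, that your convolution $n/\vph(n)=\sum_{d\mid n}\mu^2(d)/\vph(d)$ and the squarefree-counting treatment of the inner sums $M_d(y)$ are exactly the ingredients the paper used to prove (\ref{eq22}): there the divisor sum appears as $\sum_{m^*\mid n}1/m$, which, regrouped by $d=m^*$, is precisely your weight $\mu^2(d)/(d\,\vph(d))$; so in substance you have re-proved (\ref{eq22}) in a different parametrization rather than found an independent argument. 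What each route buys: citing (\ref{eq22}) and comparing against a numerically computed partial sum absorbs the constant term automatically and so sidesteps the delicate bookkeeping you correctly flag as the main obstacle (keeping the explicit error under $C_{11}-K$), at the cost of leaving the meaning of $1.334$ opaque; your version is more self-contained and conceptual, but its feasibility hinges on making the error bound tight enough that the crossover point $X_0$ stays within reach of the finite verification, which --- like the paper's ``calculation'' on $[7920,X_0]$ --- is unavoidable in either approach, and your observation that it suffices to check at squarefree $N$ is the right way to do it.
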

\begin{proof}
(\ref{eq22}) immedialtely gives
\begin{equation}
\begin{split}
\sum_{x_1<q\leq x}\frac{\mu^2(q)}{\vph(q)}=\log\frac{x}{x_1}+\theta\left[\frac{\zeta(3/2)}{\zeta(3)}\frac{\log\frac{x}{x_1}}{\sq{x}}+B_5\left(\frac{\sq{x}}{x_1}+\frac{1}{\sq{x}}\right)\right],
\end{split}
\end{equation}
where
\begin{equation}
B_5=\prod_p\left(1+\frac{2}{\sq{p}(p-1)}\right),
\end{equation}
and calculation gives that
\begin{equation}
\sum_{q\leq x}\frac{\mu^2(q)}{\vph(q)}\leq \log x+1.334
\end{equation}
for all $x\geq 7920$.
\end{proof}

Analogously to (\ref{eq35}) and (\ref{eq36}), we obtain
\begin{equation}
\sum_{q\leq Q}\mu^2(q)\abs{G^{(1)}(x; q, a)}< \frac{x\log\log x}{2e^\gamma\log^{A+1} x}+\left(1+\frac{\log x}{2Q}\right)\frac{x}{\log^{A+1} x}.
\end{equation}
and
\begin{equation}
\begin{split}
& \sum_{r<R}\sum_{q\leq Q, r\mid q}\frac{1}{\vph(q)}\sum_{\substack{b\pmod{q},\\ (b, q)=1}}\mu^2(q)\abs{G^{(1)}(x; q, b)} \\
&\quad <\sum_{r<R}\frac{x\log\log x}{2\vph(r)e^\gamma \log^{A+1} x}+\left(\frac{1}{r}+\frac{\log x}{2Q\vph(r)}\right)\frac{x}{\log^{A+1}x}\\
&\quad <\frac{(C_6+(A+3)\log\log x)x\log\log x}{2e^\gamma \log^A x}+\left(1+(A+3)\log\log x\right.\\
&\left.\qquad +\frac{(C_6+(A+3)\log\log x)\log x}{2Q}\right)\frac{x}{\log^{A+1}x}.
\end{split}
\end{equation}
Now (\ref{eq33}) gives
\begin{equation}
\sum_{q\leq Q}\abs{\psi^{(R)}(x; q, a)}\leq(C_6 xR^{-1}+(C_7+C_8)Q\sq{x})\log^3 x\log\log Q+C_{12}\frac{x(\log\log x)^2}{\log^A x}.
\end{equation}
Since $R=\log^{A+3} x$ and $Q=\sq{x}/\log^{A+3} x$, we have
\begin{equation}
\sum_{q\leq Q}\abs{\psi^{(R)}(x; q, a)}<\left(\frac{C_6+C_7+C_8}{\log\log x}+C_{12}\right)\frac{x(\log\log x)^2}{\log^A x}.
\end{equation}

Since we see that $R\geq (\log x_0)^{A+3}>7920$, we have
\begin{equation}
\begin{split}
\leq & \frac{x\log\log x}{\log^{A+1} x}\sum_{1<r\leq R}\frac{\mu^2(r)}{\vph(r)}\sum_{\substack{q\leq \frac{Q}{r}}}\frac{\mu^2(q)}{\vph(q)}\\
<& \frac{x\log\log x}{\log^{A+1} x}(C_{11}+\log R)(C_6+\log Q) \\
<& \frac{x(\log\log x)(C_{11}+(A+3)\log\log x)}{2\log^A x}\\
\leq & \frac{(A+3)C_{13}x(\log\log x)^2}{2\log^A x},
\end{split}
\end{equation}
Using this in place of (\ref{eq310}), we obtain
\begin{equation}
\sum_{q\leq Q}\mu^2(q)\abs{\psi^{(R)}(x; q, a)-\psi^{(1)}(x; q, a)}<\frac{(C_{10}+(A+3)(C_0+e^{-100})C_{13})x(\log\log x)^2}{2\log^A x}
\end{equation}
and therefore
\begin{equation}\label{eq312}
\sum_{q\leq Q}\mu^2(q)\abs{\psi^{(1)}(x; q, a)}<\left(\frac{C_6+C_7+C_8}{\log\log x}+C_{12}+C_{10}+(A+3)(C_0+e^{-100})C_{13}\right)\frac{x(\log\log x)^2}{2\log^A x}.
\end{equation}

\begin{table}
\caption{upper bounds for constants in (\ref{eq312}) and Theorem \ref{thm1}}\label{tbl3}
\begin{center}
\begin{small}
\begin{tabular}{| c | c | c |}
 \hline
$A$ & $C_{12}$ & $C_1^\prime$ \\
 \hline
$2$ & $1.5203771$ & $2.1316672$ \\
$3$ & $1.7963320$ & $2.3833498$ \\
$4$ & $2.0732743$ & $2.6410579$ \\
$5$ & $2.3508971$ & $2.9029123$ \\
$6$ & $2.6290135$ & $3.1677709$ \\
$7$ & $2.9075004$ & $3.4348830$ \\
 \hline
\end{tabular}
\end{small}
\end{center}
\end{table}

{}

{\small Tomohiro Yamada}\\
{\small Center for Japanese language and culture\\Osaka University\\562-8558\\8-1-1, Aomatanihigashi, Minoo, Osaka\\Japan}\\
{\small e-mail: \protect\normalfont\ttfamily{tyamada1093@gmail.com}}
\end{document}